\documentclass[11pt]{amsart}
\usepackage{geometry}                
\usepackage{graphicx}
\usepackage{amssymb}
\usepackage{epstopdf}
\usepackage{hyperref}
\usepackage{tikz}
\usepackage{pgfplots}
\usepackage{mathrsfs}
\usetikzlibrary{arrows}
\usepackage[all]{xy}
\usepackage{tabularx}
\usepackage{subcaption}
\title[Equilateral dimension of $\fH^n$]{Equilateral dimension of the Heisenberg group $\fH^n$}
\author[C. Jiang]{Chao Jiang }
\address{School of Mathematics and Physics, University of South China, Hengyang, P.R. China}
\email{1158911581@qq.com}
\author[FM. Cai]{Fangming Cai}
\address{School of Information Engineering, Jiangxi Polytechnic University, Jiujiang, P.R. China}
\email{caifengming2025@163.com}

\author[MQ. Yu]{MengQi Yu }
\address{School of Computational Science and Electronics, Hunan Institute of Engineering, Xiangtan, P.R. China}
\email{837915672@qq.com}
\subjclass[2020]{Primary 53C17; Secondary 22E25, 51F99}

\keywords{Heisenberg group, Kor\'anyi metric, equilateral sets,
	equilateral dimension, complex hyperbolic geometry}

\date{\today}

\begin{document}
	
\newtheorem{theorem}{Theorem}[section]
\newtheorem{lemma}[theorem]{Lemma}
\newtheorem{proposition}[theorem]{Proposition}
\newtheorem{corollary}[theorem]{Corollary}
\theoremstyle{definition}
\newtheorem{definition}[theorem]{Definition}
\theoremstyle{remark}
\newtheorem{remark}{Remark}
\newtheorem{example}{Example}

\newcommand{\C}{{\mathbb C}}
\newcommand{\R}{{\mathbb R}}
\newcommand{\Z}{{\mathbb Z}}
\newcommand{\fH}{{\mathfrak H}}
\newcommand{\CH}{\mathbf H_{\C}}
\newcommand{\ip}[2]{\langle #1,\, #2\rangle}
\newcommand{\norm}[1]{\|#1\|}
\newcommand{\one}{\mathbf{1}}
\newcommand{\tr}{\mathrm{tr}}
\renewcommand{\Re}{\operatorname{Re}}
\renewcommand{\Im}{\operatorname{Im}}
\newcommand{\bignorm}[1]{\left|#1\right|}
\newcommand{\gauge}[1]{\left\|#1\right\|}

\begin{abstract}
For every $n\geqslant1$, the equilateral dimension of the Heisenberg group
$\fH^n=\C^n\times\R$ with the Kor\'anyi metric is $2n+2$.
\end{abstract}

	\maketitle
	
	\section{Introduction}\label{sec-intro}

	Equilateral sets are among the basic finite configurations in metric geometry.
	For a metric space $(X,d)$, the \emph{equilateral dimension}
	$\dim_{\mathrm E}(X)$ is the supremum of the cardinalities of finite subsets
	whose pairwise distances are equal.
	
	For the $n$-dimensional Euclidean space, the classical result is
	$\dim_{\mathrm E}(\mathbb R^n)=n+1$, with equality attained by the
	vertices of a regular simplex; see, for example,
	\cite{Blu}. On the unit sphere $S^{n-1}$, an equilateral set has at most $n+1$
	points, with equality attained by a regular simplex; the associated
	sharp spherical-cap packing is contained in Rankin's classical result
	\cite{Rnk}.
	
For every $n$-dimensional real normed space $X$, Petty and Soltan,
using the Danzer--Gr\"unbaum theorem on antipodal sets, proved that
$\dim_{\mathrm E}(X)\leq 2^n$, with equality if and only if $X$ is
linearly isometric to $\ell_\infty^n$ \cite{DG,P,So}. By contrast,
the exact value of $\dim_{\mathrm E}(\ell_1^n)$ remains unknown in
general. The $2n$ vertices of the cross-polytope form an equilateral
set, and Kusner conjectured that this configuration is maximal,
namely $\dim_{\mathrm E}(\ell_1^n)=2n$ \cite{Guy}. Equilateral sets
in $\ell_p^n$ have also been studied extensively; see, for example,
\cite{AP,Kus}. This normed-space theory does not apply directly to
the Heisenberg group, since the Kor\'anyi metric is not induced by a
norm on the underlying vector space.
	
	
	The first Heisenberg group $\fH^1=\C\times\R$ is equipped with its
	natural Kor\'anyi metric $d_{\mathrm{Cyg}}$, also known as the Cygan
	metric in complex hyperbolic geometry. Kim and Platis
	\cite{KP} proved that $\dim_{\mathrm E}(\fH^1)=4$,
	thereby giving a negative answer to a question of Chousionis--Tyson
	\cite{CT} on the existence of five pairwise equidistant points in
	$\fH^1$. The purpose of this paper is to determine the equilateral
	dimension of the Heisenberg groups $\fH^n=\C^n\times\R$ for every
	$n\geqslant1$.
	
	\begin{theorem}\label{thm-main}
		For every integer $n\geqslant 1$, the equilateral dimension of $\fH^n$
		satisfies
		\[
		\dim_{\mathrm E}(\fH^n)\;=\;2n+2.
		\]
		Equivalently, $\fH^n$ contains a $2n+2$-point equilateral set, and no
		equilateral set has more points.
	\end{theorem}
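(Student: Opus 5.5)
The plan is to use the standard link between the Kor\'anyi metric and complex hyperbolic geometry. For points $p=(z,t)$ of $\fH^n=\C^n\times\R$ the metric is
\[
d(p,q)^4=\bigl|\,|z-w|^2+i(t-s+2\Im\ip{z}{w})\bigr|^2 .
\]
We identify $\fH^n$ with $\partial\CH^{n+1}\setminus\{\infty\}$ by the lift
\[
p\mapsto \mathbf p=\bigl(-|z|^2+it,\;\sqrt2\,z,\;1\bigr)
\]
in $\C^{n,1}$. With the Siegel Hermitian form $\langle\cdot,\cdot\rangle$, normalized so that the last coordinate of a standard lift is $1$, a direct computation gives
\[
d(p,q)^2=|\langle \mathbf p,\mathbf q\rangle| .
\]
Hence an equilateral set $p_0,\dots,p_m$ with common distance $1$ becomes a family of null vectors with $|\langle \mathbf p_j,\mathbf p_k\rangle|=1$ for $j\ne k$. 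Equivalently, together with $\infty$ we get $m+2$ boundary points for which all the Cygan-type cross-ratios involving $\infty$ have modulus $1$.

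\textbf{Upper bound.} Let $G$ be the Gram matrix $G_{jk}=\langle\mathbf p_j,\mathbf p_k\rangle$. It is Hermitian, has zero diagonal and unimodular off-diagonal entries, and its rank is at most $n+2$, since the vectors live in $\C^{n+1,1}$. Its signature is also constrained: $G$ has at most one negative eigenvalue and at most $n+1$ positive ones. I would first write $G=D^{*}ID$ with $D$ diagonal unitary, where $I$ has off-diagonal entries normalised, for example with $I_{0k}=1$. This is possible because all the lifts can be rescaled by unit complex numbers.

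The key step is to show that such a zero-diagonal Hermitian matrix with unimodular off-diagonal entries, of size $N=m+1$ and with at most one negative eigenvalue, must have rank at least something like $N/2+1$. That gives $N\le 2(n+2)-2=2n+2$.

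I would attack this step by considering $G^2$. Its diagonal entries equal $N-1$, so
\[
\tr G^2=N(N-1).
\]
Since $\tr G=0$, the positive eigenvalues sum to $|\lambda_-|$. By Cauchy--Schwarz, with $r_+\le n+1$ positive eigenvalues,
\[
N(N-1)=\sum\lambda_+^2+\lambda_-^2\le (1+1/r_+)\lambda_-^2 .
\]
An upper bound on $|\lambda_-|$ is then needed. The idea is to test the form on a vector $x$, for instance $x=(\mathbf 1)$ twisted by the phases, and compare with the Euclidean norm. That alone gives only $\lambda_-^2\le N(N-1)\cdot$const, which is not enough.

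So I would instead use the geometric input: the fact that $\lambda_-$ is the only negative eigenvalue forces all $3\times3$ principal minors to satisfy $\det\le 0$. For a zero-diagonal unimodular $3\times3$ matrix this determinant is $2\cos\alpha_{jkl}$, where $\alpha$ is the Cartan angular invariant. So every triple has $\cos\alpha_{jkl}\le0$, and $4\times4$ minors give further constraints. Summing the triple conditions, together with the rank bound via the identity
\[
\tr G^3=6\sum_{j<k<l}\cos\alpha_{jkl},
\]
and $\tr G^3\le$ (a bound coming from $r_+\le n+1$ and $\tr G=0$), should force $N\le 2n+3$, that is $m+1\le 2n+2$. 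This trace/eigenvalue optimisation is the main obstacle. I expect the sharp version to require combining $\tr G^2$ and $\tr G^3$ with the extremal eigenvalue configuration: $n+1$ equal positive eigenvalues and one negative eigenvalue.

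\textbf{Construction.} I would look for $N=2n+2$ points realising that extremal spectrum. The natural candidate is $n+1$ copies of the $\fH^1$ four-point configuration of Kim--Platis, arranged as antipodal pairs: take $\pm a e_j$ in $\C^n$ with suitable heights $t$, plus two points on the vertical axis $(0,\pm h)$. I would solve for $a$ and $h$ so that all pairs are at distance $1$, checking three kinds of pairs:
\begin{itemize}
\item $(ae_j,-ae_j)$ gives $16a^4+(\cdots)^2$;
\item $(ae_j,\pm ae_k)$ is orthogonal, so the $\Im$ term vanishes;
\item the vertical points against the horizontal ones.
\end{itemize}
The heights of the horizontal points should equal $0$ or be chosen symmetrically, and the resulting small algebraic system should be solvable. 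For $n=1$ this recovers the Kim--Platis four-point set.
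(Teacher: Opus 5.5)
Both halves of your proposal have genuine gaps.

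\textbf{Upper bound.} Your reduction to the Gram matrix $G$ of the null lifts is fine: $G$ has zero diagonal, unimodular off-diagonal entries, $\nu_-(G)\le 1$ and $\nu_+(G)\le n+1$. But the step that actually bounds $N$ is left open, and the tools you sketch do not close it.

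First, the Cauchy--Schwarz step is reversed. The inequality $(\sum\lambda_+)^2\le r_+\sum\lambda_+^2$ gives $N(N-1)\ge(1+1/r_+)\lambda_-^2$. That is an upper bound on $|\lambda_-|$, not on $N$.

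Second, the summed triple conditions add nothing. Since $\tr G=0$ and all $\lambda_+\ge 0$, one has $\tr G^3=\sum\lambda_+^3-(\sum\lambda_+)^3\le 0$ automatically. So $6\sum\cos\alpha_{jkl}\le 0$ is already implied by the spectral data.

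The missing idea is to eliminate the unknown phases rather than estimate them. Pass to $\xi_i\otimes\overline{\xi_i}\in V\otimes\overline V$, i.e.\ to the entrywise product $G\circ\overline G$, whose entries are $|G_{jk}|^2$.
\begin{itemize}
\item Writing $G=\sum_a\varepsilon_a w_aw_a^*$ gives $G\circ\overline G=\sum_{a,b}\varepsilon_a\varepsilon_b(w_a\circ\overline{w_b})(w_a\circ\overline{w_b})^*$. This has at most $2\nu_+(G)\nu_-(G)\le 2n+2$ negative terms, which is exactly $\nu_-$ of the form $\mathcal{H}$ on $V\otimes\overline V$.
\item On the other hand, $G\circ\overline G=J-I$ has $\nu_-$ equal to its size minus one.
\end{itemize}
You must also include the lift $\mathbf{q}_\infty$ in $G$; all its products with standard lifts equal $1$. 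Without it, this argument only excludes $2n+4$ points, not $2n+3$.

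Relatedly, the abstract lemma you aim for (rank $\ge N/2+1$) is false. The Gram matrix of an extremal $(2n+2)$-point set together with $\mathbf{q}_\infty$ has size $2n+3$ and rank at most $n+2$. The correct statement is $\nu_+(G)\ge (N-1)/2$.

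\textbf{Construction.} Your configuration is not equilateral for any $n\geqslant 1$. Two points $[\mathbf 0,\pm h]$ at distance $1$ force $h=\tfrac12$. Equidistance from both forces every other point onto $t=0$ with $\norm{z}^2=\tfrac{\sqrt3}{2}$, so there is no freedom in the heights. Then $[z,0]$ and $[w,0]$ are at distance $1$ if and only if $|\ip{z}{w}-\tfrac{\sqrt3}{2}|=\tfrac12$.
\begin{itemize}
\item An antipodal pair $w=-z$ gives $\sqrt3$, i.e.\ $d^4=12$.
\item An orthogonal pair gives $\tfrac{\sqrt3}{2}$, i.e.\ $d^4=3$.
\end{itemize}
So neither is allowed: every pairwise product must have real part at least $\tfrac{\sqrt3-1}{2}>0$. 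In particular your $n=1$ case does not work. A four-point set of this shape in $\fH^1$ has horizontal points $[r,0]$ and $[re^{i\theta},0]$ with $r^4=\tfrac34$ and $\cos\theta=\tfrac56$, which is far from antipodal.

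What is needed is $2n$ vectors in $\C^n$ whose pairwise Hermitian products all lie on the circle $|\omega-\tfrac{\sqrt3}{2}|=\tfrac12$. The paper does this as follows.
\begin{itemize}
\item For the first $n$ vectors it takes the Cholesky factor $\mathsf Z_1$ of $\tfrac12 I_n+\tfrac{\sqrt3-1}{2}J_n$.
\item For the remaining $n$ it takes $\mathsf Z_1^{-T}\mathsf B$ with $\mathsf B=\tfrac{u-v}{2}I_n+\tfrac{\sqrt3+v}{2}J_n$.
\end{itemize}
This reduces the problem to solving $|u-v|=1$ and $|n\sqrt3+u+(n-1)v|=n\sqrt3-n+1$ for two unit complex numbers $u,v$.
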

	\begin{remark}
		The value $2n+2$ also equals the Hausdorff dimension of $\fH^n$
		with respect to the Kor\'anyi metric.
	\end{remark}

The boundary of the Siegel domain $\partial\mathbf H_{\mathbb C}^{n+1}$
is the one-point compactification of $\fH^n$. Consider an equilateral
set of cardinality $m$ in $\fH^n$ and the standard lifts of its points
in the Hermitian space $V=\mathbb C^{n+1,1}$. Under these lifts,
pairwise equidistance of the points means exactly that the pairwise
Hermitian products of the lifts have a common modulus. The Gram matrix
of the lifts, however, is not determined by this condition. We pass
from each lift $v$ to the tensor $v\otimes\overline v$ in
$V\otimes\overline V$; the tensors of the lifts have an exact Gram
matrix. Adjoining the lift of the point at infinity increases the
order of this matrix by one, so that its negative inertia index is
$m$. Its negative inertia index is at most $2n+2$, the negative
inertia index of $V\otimes\overline V$, so we obtain the following
result.
	
	\begin{proposition}\label{prop-upper}
		Every equilateral subset of $\fH^n$ has cardinality at most $2n+2$.
	\end{proposition}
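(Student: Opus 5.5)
The plan is to linearize the equidistance condition by passing from $V=\C^{n+1,1}$ to the real vector space of Hermitian tensors in $V\otimes\overline V$. On that space the squared moduli $|\langle v,w\rangle|^2$ become exact values of a real symmetric bilinear form. An equilateral set then produces a Gram matrix whose negative inertia index we can compute exactly, and we compare it with the negative index of the ambient form.

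\textbf{Step 1 (lifts).} Use the Siegel model with Hermitian form
\[
\langle v,w\rangle=v_1\overline{w_{n+2}}+v_{n+2}\overline{w_1}+\sum_{j=2}^{n+1}v_j\overline{w_j}.
\]
This form has signature $(n+1,1)$. Lift a point $p=(z,t)\in\fH^n$ to the null vector $\tilde p=\bigl((-|z|^2+it)/2,\,z,\,1\bigr)$, and lift $\infty$ to $q_\infty=(1,0,\dots,0)$. A direct computation gives
\[
|\langle\tilde p,\tilde q\rangle|=\tfrac12\, d_{\mathrm{Cyg}}(p,q)^2,
\qquad
\langle\tilde p,q_\infty\rangle=1 .
\]
Hence if $p_1,\dots,p_m$ are pairwise at Kor\'anyi distance $r$, then $|\langle\tilde p_i,\tilde p_j\rangle|^2=a:=r^4/4>0$ for all $i\ne j$.

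\textbf{Step 2 (the tensor form).} On $W=V\otimes\overline V$, define the Hermitian form by
\[
\langle v\otimes\bar v',\,w\otimes\bar w'\rangle=\langle v,w\rangle\,\overline{\langle v',w'\rangle}.
\]
Then $\langle v\otimes\bar v,\,w\otimes\bar w\rangle=|\langle v,w\rangle|^2$ is real. Restrict to the real subspace of Hermitian tensors; this is the span of the vectors $v\otimes\bar v$ and has real dimension $(n+2)^2$. Diagonalize $\langle\cdot,\cdot\rangle$ on $V$ with signs $\epsilon_1,\dots,\epsilon_{n+2}$, exactly one of which is negative. The tensor form is then diagonal in the basis $e_k\otimes\bar e_k$, $(e_k\otimes\bar e_l+e_l\otimes\bar e_k)/\sqrt2$, $i(e_k\otimes\bar e_l-e_l\otimes\bar e_k)/\sqrt2$, with signs $\epsilon_k\epsilon_l$. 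The negative directions come from the pairs $k\ne l$ with one index negative, $2$ for each of the $n+1$ such pairs. So the negative index is $2(n+1)=2n+2$.

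\textbf{Step 3 (the Gram matrix).} Set $x_i=\tilde p_i\otimes\overline{\tilde p_i}$ and $x_\infty=q_\infty\otimes\bar q_\infty$. All of these vectors are null, since the lifts are null. Their Gram matrix is
\[
G=\begin{pmatrix} a(J-I) & \one\\ \one^{T} & 0\end{pmatrix},
\]
where $J$ is the all-ones $m\times m$ matrix.

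We compute the inertia of $G$ in two pieces.
\begin{itemize}
\item On vectors $(x,0)$ with $\sum x_i=0$, the form is $-a|x|^2$. This gives an $(m-1)$-dimensional negative definite subspace.
\item This subspace is $G$-orthogonal to $\mathrm{span}\{(\one,0),(0,1)\}$. On that span $G$ has matrix $\begin{pmatrix} am(m-1) & m\\ m&0\end{pmatrix}$, whose determinant is $-m^2<0$. So it contributes exactly one more negative direction.
\end{itemize}
Hence the negative inertia index of $G$ is $m$.

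\textbf{Step 4 (conclusion).} The Gram matrix of finitely many vectors represents the ambient form pulled back along the linear map $\R^{m+1}\to W_{\mathrm{herm}}$. Its negative index is therefore at most the negative index of the ambient form, so $m\le 2n+2$.

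I expect the main obstacle to be the bookkeeping in Steps 1–2. One must fix normalizations so that the Cygan distance identity holds exactly, and note that the constant is irrelevant since only $a>0$ is used. One must also check that the tensor form restricted to Hermitian tensors is real symmetric with negative index exactly $2n+2$, not $4n+4$ as one would get by counting complex dimensions carelessly.
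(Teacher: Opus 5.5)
Your proof is correct and is essentially the paper's argument: null lifts together with the lift of $q_\infty$, then the tensors $v\otimes\overline v$, a Gram matrix of negative index $m$, and a comparison with the negative index $2n+2$ of the induced form on $V\otimes\overline V$. The differences are cosmetic. The paper works on the full complex space $W$ and normalizes the distance so the Gram matrix is exactly $J_{m+1}-I_{m+1}$. You restrict to the real subspace of Hermitian tensors and handle a general $a>0$ through the bordered-matrix inertia count.
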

We prove the lower bound by constructing an explicit equilateral set
of $2n+2$ points in $\fH^n$. Following the reduction used in
\cite{KP}, two vertical points force the remaining points into the
horizontal subspace, where the distance conditions become constraints
on Hermitian inner products in $\C^n$; the construction is thereby
converted into a configuration problem of $2n$ vectors in $\C^n$.
These vectors are divided into two families of size $n$. Prescribing
the Gram matrix of the first family determines it up to the action of
$U(n)$ on $\mathbb C^n$; choosing a convenient representative, we
obtain the first family explicitly. Each vector of the second family
is determined by its inner products with the first family, and
requiring the second family to realize the same Gram matrix as the
first again reduces the problem to two scalar equations in two unit
complex numbers, which we solve explicitly. This yields the following
bound.
	
	\begin{proposition}\label{prop-lower}
		For every integer $n\geqslant1$, the Heisenberg group $\fH^n$
		contains an equilateral set of cardinality $2n+2$.
	\end{proposition}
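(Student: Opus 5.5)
We work with the model in which $\fH^n=\C^n\times\R$ has the group law $(z,t)\ast(w,s)=(z+w,\,t+s+2\Im\ip{z}{w})$, the gauge $\gauge{(z,t)}=(|z|^4+t^2)^{1/4}$, and the Kor\'anyi distance $d(p,q)=\gauge{q^{-1}\ast p}$. If the paper's sign or normalisation conventions differ, only the constants below change. We normalise the common distance to $1$ and follow the reduction of \cite{KP}.

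\emph{Reduction to a Gram-matrix problem.} Take the two vertical points $(0,\pm\tfrac12)$; their distance is $(4\cdot\tfrac14)^{1/4}=1$. For a horizontal point $(z,0)$, the distance to each vertical point is $1$ exactly when $|z|^4+\tfrac14=1$, that is, when $|z|^2=r:=\sqrt3/2$. For two horizontal points $(z,0),(w,0)$ with $|z|^2=|w|^2=r$, write $h=\ip{z}{w}$. Then
\[
d^4=|z-w|^4+4(\Im h)^2=(2r-2\Re h)^2+4(\Im h)^2=4|r-h|^2 .
\]
Hence pairwise distance $1$ is equivalent to $h=r-\tfrac12\omega$ for some unimodular $\omega$. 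It therefore suffices to find $2n$ vectors $z_1,\dots,z_{2n}\in\C^n$ whose Gram matrix has the form
\[
G=r\,\mathbf{J}-\tfrac12 W ,
\]
where $\mathbf{J}$ is the all-ones matrix and $W$ is Hermitian with zero diagonal and unimodular off-diagonal entries. By the spectral theorem, such vectors exist if and only if $G$ is positive semidefinite of rank at most $n$.

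\emph{Two families.} Split the vectors into $x_1,\dots,x_n$ and $y_1,\dots,y_n$. For the first family prescribe a Toeplitz Gram matrix $A$ with diagonal entries $r$, entries $r-\tfrac12 e^{i\theta}$ above the diagonal, and their conjugates below. First check that $A$ is positive definite for a suitable range of $\theta$. This should follow from an explicit eigenvalue computation, since $W$ is then $i$ times a tournament-type matrix twisted by a phase, and it is diagonalisable by a discrete Fourier transform after a gauge change. Then realise $A$ by Cholesky, so that the $x_j$ form a basis of $\C^n$ determined up to $U(n)$. Each $y_k$ is then determined by its inner products with the $x_j$, namely by a cross matrix $B$ whose entries $r-\tfrac12\omega_{jk}$ we must choose. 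The full Gram matrix $\begin{pmatrix}A&B\\ B^*&C\end{pmatrix}$ has rank $n$ exactly when $C=B^*A^{-1}B$. So the remaining condition is
\[
B^*A^{-1}B = A ,
\]
which says that the second family has the same Gram matrix as the first.

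\emph{The main obstacle.} The hard step is solving $B^*A^{-1}B=A$ with $B$ constrained to have entries of modulus-$\tfrac12$ deviation from $r$. I would first try a structured ansatz in which $B$ lies in the commutative algebra generated by $A$. Concretely, take $B$ to have diagonal entries $r-\tfrac12\alpha$ and off-diagonal entries following the same upper/lower pattern as $A$ with a second phase $\beta$, where $|\alpha|=|\beta|=1$. Because everything then commutes and is simultaneously diagonalised, the matrix equation reduces to finitely many scalar identities among eigenvalues. I expect these to collapse to two equations in the two unit complex numbers $\alpha,\beta$, with $\theta$ as a free parameter to be fixed. One then solves these two equations explicitly and verifies that the solution is consistent for all $n$, including the degenerate case $n=1$, where it should reproduce the Kim--Platis configuration.

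Adjoining the two vertical points to the $2n$ horizontal points $(x_j,0),(y_k,0)$ then gives the required equilateral set of $2n+2$ points.
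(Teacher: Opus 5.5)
Your reduction is correct and matches the paper's: the vertical pair $(0,\pm\frac12)$, horizontal points with $|z|^2=r=\frac{\sqrt3}{2}$ and $\ip{z}{w}=r-\frac12\omega$, a first family determined up to $U(n)$ by its Gram matrix $A$, and the Schur-complement condition $B^*A^{-1}B=A$.

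The gap is that you stop exactly where the content of the proposition lies. No cross matrix $B$ is ever exhibited, and ``I expect these to collapse to two equations \dots\ one then solves these two equations'' establishes nothing about existence.

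Moreover, the ansatz you describe does not give the commutativity you rely on. Let $N_+$ and $N_-$ be the strictly upper and strictly lower triangular all-ones matrices, so that $A=rI+a_0N_++\overline{a_0}N_-$ with $a_0=r-\frac12e^{i\theta}$. Then $[A,\,xI+yN_++zN_-]=(a_0z-\overline{a_0}y)[N_+,N_-]$. Since $[N_+,N_-]\neq0$ for $n\geq2$, the matrix $xI+yN_++zN_-$ commutes with $A$ if and only if $a_0z=\overline{a_0}y$.

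With your pattern, $y=r-\frac12\beta$ above the diagonal and $z=\overline{y}$ below, this says $\overline{a_0}y\in\R$. So $y$ must lie on a line through $0$, which meets the circle $|y-r|=\frac12$ in at most two points. There is therefore no second free phase. At $\theta=0$ this forces $\beta=\pm1$, and a direct check shows neither value admits any $\alpha$. For $\theta\neq0$ with $\beta$ free, $B$ does not commute with $A$, and $B^*A^{-1}B=A$ does not split into scalar identities at all.

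The missing idea is to take $\theta=0$ and give $B$ equal entries above and below the diagonal, not conjugates. With $z=y$ the criterion above reads $a_0\in\R$, which is exactly $\theta=0$. Then $A=\frac12I+\frac{\sqrt3-1}{2}J$ and $B=\frac{\beta-\alpha}{2}I+(r-\frac12\beta)J$ both lie in $\operatorname{span}(I,J)$.

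On $\one^\perp$ and on $\C\one$, the matrix $A$ has eigenvalues $\frac12$ and $\mu=\frac{n\sqrt3-n+1}{2}$. Restricting to these two subspaces turns $B^*A^{-1}B=A$ into the pair of equations $|\alpha-\beta|=1$ and $|n\sqrt3-\alpha-(n-1)\beta|=n\sqrt3-n+1$.

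The first equation forces $\alpha=\beta e^{\pm i\pi/3}$. Hence $w=\alpha+(n-1)\beta=\beta(n-1+e^{\pm i\pi/3})$ runs over the circle $|w|^2=n^2-n+1$. The point $w=n-1+\frac{1}{2\sqrt3}+i\sqrt{n-\frac{n-1}{\sqrt3}-\frac1{12}}$ lies on both circles, and the radicand is positive for all $n\geq1$.

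This is the paper's solution with $u=-\alpha$ and $v=-\beta$, and it is precisely the step your proposal needed to supply. For $n=1$ only the second equation survives. It gives $\Re\alpha=\frac{1}{2\sqrt3}$, which is the paper's four-point configuration with angle $\arccos\frac56$.
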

	
	Theorem~\ref{thm-main} is an immediate consequence of
	Propositions~\ref{prop-upper} and~\ref{prop-lower}.
	
	The paper is organised as follows. In Section~\ref{sec-prelim} we recall
	the Heisenberg group, the Kor\'anyi metric, null lifts, and the
	Hermitian inertia theorem. Section~\ref{sec-upper} proves
	Proposition~\ref{prop-upper}, and Section~\ref{sec-lower} contains the
	reduction and the explicit construction establishing
	Proposition~\ref{prop-lower}.
	
	\section{Preliminaries}\label{sec-prelim}

The material in this section is standard. Further details may be found in
\cite{Gol}, except where otherwise indicated.

Throughout, all complex vector spaces are regarded as column vector spaces, and
$^{*}$ denotes the conjugate transpose. For
$\zeta,\eta\in\mathbb{C}^{n}$, let
\[
\langle \zeta,\eta\rangle
=
\eta^*\zeta
=
\sum_{j=1}^{n}\zeta_j\overline{\eta_j},
\qquad
\|\zeta\|^2
=
\langle\zeta,\zeta\rangle
\]
be the standard Hermitian inner product on $\mathbb{C}^{n}$.

Let $\mathbb{C}^{n+1,1}$ denote the $(n+2)$-dimensional complex vector space
equipped with the Hermitian form of signature $(n+1,1)$ given by
\[
\langle\mathbf{z},\mathbf{w}\rangle_H
=
\mathbf{w}^{*}H\mathbf{z}
=
\overline{w}_{n+1}z_0
+
\langle\zeta,\eta\rangle
+
\overline{w}_0z_{n+1},
\]
where
\[
\mathbf{z}
=
\begin{bmatrix}
	z_0\\
	\zeta\\
	z_{n+1}
\end{bmatrix},
\qquad
H=
\begin{bmatrix}
	0 & 0 & 1\\
	0 & I_n & 0\\
	1 & 0 & 0
\end{bmatrix},
\qquad
\mathbf{w}
=
\begin{bmatrix}
	w_0\\
	\eta\\
	w_{n+1}
\end{bmatrix}.
\]

The projectivisation of $\mathbb{C}^{n+1,1}$ is naturally identified with
$\mathbb{CP}^{n+1}$. A vector
$\mathbf{z}\in\mathbb{C}^{n+1,1}\setminus\{0\}$ is called
\emph{negative}, \emph{null}, or \emph{positive} according as
$\langle\mathbf{z},\mathbf{z}\rangle_H$ is negative, zero, or positive,
respectively. Complex hyperbolic $(n+1)$-space
$\mathbf{H}_{\mathbb{C}}^{n+1}$ is the set of negative vectors in
$\mathbb{CP}^{n+1}$, and its boundary
$\partial\mathbf{H}_{\mathbb{C}}^{n+1}$ consists of the null vectors.

By setting $z_{n+1}=1$, the Siegel domain together with its finite boundary is
given by
\[
\overline{\mathbf{H}_{\mathbb{C}}^{n+1}}\setminus\{q_\infty\}
=
\left\{
(z_0,\zeta)\in\mathbb{C}\times\mathbb{C}^{n}:
2\operatorname{Re}(z_0)+\|\zeta\|^2\leq 0
\right\}.
\]
The equality case gives the finite boundary
$\partial\mathbf{H}_{\mathbb{C}}^{n+1}\setminus\{q_\infty\}$.
For a finite boundary point $q=(z_0,\zeta)$, set
\[
z=\frac{\zeta}{\sqrt{2}}\in\mathbb{C}^{n},
\qquad
t=\operatorname{Im}(z_0).
\]
Then
\[
z_0=-\|z\|^2+it.
\]
We write $q=[z,t]$ and refer to $[z,t]$ as its
\emph{horospherical coordinates}. Its standard lift is
\[
\mathbf{q}
=
\begin{bmatrix}
	-\|z\|^2+it\\
	\sqrt{2}\,z\\
	1
\end{bmatrix}.
\]

The distinguished boundary point $q_\infty$ has standard lift
$$
\mathbf{q}_\infty=(1,\mathbf{0},0)^T,
$$
where $\mathbf{0}\in\mathbb{C}^{n}$ denotes the zero vector. It follows immediately from the above expressions that
$$
\langle \mathbf{q}_\infty,\mathbf{q}\rangle_H=1
$$
for every finite boundary point $q=[z,t]$.

The Bergman distance $\rho$ on
$\mathbf{H}_{\mathbb{C}}^{n+1}$ is given by
\[
\cosh^2\left(\frac{\rho(p,q)}{2}\right)
=
\frac{
	\langle\mathbf{p},\mathbf{q}\rangle_H
	\langle\mathbf{q},\mathbf{p}\rangle_H
}{
	\langle\mathbf{p},\mathbf{p}\rangle_H
	\langle\mathbf{q},\mathbf{q}\rangle_H
},
\]
where $\mathbf{p}$ and $\mathbf{q}$ are lifts of
$p,q\in\mathbf{H}_{\mathbb{C}}^{n+1}$, respectively.

For $[w,s]\in
\partial\mathbf{H}_{\mathbb{C}}^{n+1}\setminus\{q_\infty\}$,
consider the matrix
\[
T_{[w,s]}
=
\begin{bmatrix}
	1 & -\sqrt{2}\,w^* & -\|w\|^2+is\\
	0 & I_n & \sqrt{2}\,w\\
	0 & 0 & 1
\end{bmatrix}.
\]
A direct calculation shows that $T_{[w,s]}$ preserves
$\langle\cdot,\cdot\rangle_H$, and hence induces a Bergman isometry of
$\mathbf{H}_{\mathbb{C}}^{n+1}$. It fixes $q_\infty$ and acts on the
finite boundary by
\[
T_{[w,s]}[z,t]
=
\left[
w+z,\,
s+t+2\operatorname{Im}\langle w,z\rangle
\right].
\]
Such an isometry is called a \emph{Heisenberg translation}.

The composition of two Heisenberg translations satisfies
\[
T_{[z,t]}T_{[w,s]}
=
T_{\left[
	z+w,\,
	t+s+2\operatorname{Im}\langle z,w\rangle
	\right]}.
\]
This induces a group structure on
$\mathbb{C}^{n}\times\mathbb{R}$ given by
\[
[z,t]\cdot[w,s]
=
\left[
z+w,\,
t+s+2\operatorname{Im}\langle z,w\rangle
\right].
\]
The resulting group is the \emph{$n$-th Heisenberg group}, denoted by
\[
\mathfrak{H}^{n}
=
\mathbb{C}^{n}\times\mathbb{R}.
\]
The identity element is $[0,0]$, and $[z,t]^{-1}=[-z,-t].$
Thus the boundary of the Siegel domain may be regarded as the one-point
compactification of $\mathfrak{H}^{n}$.

The \emph{Cygan gauge} on $\mathfrak{H}^{n}$ is defined by
\[
\|[z,t]\|_{\mathrm{Cyg}}
=
\left|
\|z\|^2-it
\right|^{1/2}
=
\left(
\|z\|^4+t^2
\right)^{1/4}.
\]
The associated left-invariant metric is defined by
$$
d_{\mathrm{Cyg}}
\bigl([z,t],[w,s]\bigr)
=
\left\|
[w,s]^{-1}\cdot[z,t]
\right\|_{\mathrm{Cyg}}.
$$
With this normalization, it is commonly referred to as the Kor\'anyi metric in the Heisenberg-group literature and as the Cygan metric in complex hyperbolic geometry. Throughout the paper, we denote it by $d_{\mathrm{Cyg}}$.
Hence
$$
d_{\mathrm{Cyg}}
\bigl([z,t],[w,s]\bigr)
=
\left[
\|z-w\|^4
+
\left(
t-s+
2\operatorname{Im}\langle z,w\rangle
\right)^2
\right]^{1/4}.
$$

For boundary points
\[
p=[z,t],
\qquad
q=[w,s],
\]
a direct calculation gives
\[
\begin{aligned}
	\langle\mathbf{p},\mathbf{q}\rangle_H
	&=
	-\|z-w\|^2
	+i\left(
	t-s+2\operatorname{Im}\langle z,w\rangle
	\right).
\end{aligned}
\]
Therefore
\[
d_{\mathrm{Cyg}}(p,q)
=
\left|
\langle\mathbf{p},\mathbf{q}\rangle_H
\right|^{1/2}.
\]

For $\lambda>0$, the \emph{dilation}
\[
\delta_\lambda:\mathfrak{H}^{n}\longrightarrow\mathfrak{H}^{n},
\qquad
\delta_\lambda[z,t]=[\lambda z,\lambda^2t],
\]
is an automorphism of the Heisenberg group. Moreover,
\[
\|\delta_\lambda[z,t]\|_{\mathrm{Cyg}}
=
\lambda\|[z,t]\|_{\mathrm{Cyg}},
\]
and hence
\[
d_{\mathrm{Cyg}}
\bigl(\delta_\lambda p,\delta_\lambda q\bigr)
=
\lambda d_{\mathrm{Cyg}}(p,q)
\]
for all $p,q\in\mathfrak{H}^{n}$. In particular, any nonzero common
distance of an equilateral set may be normalized to $1$.

For \(U\in U(n)\), the standard Hermitian product on \(\mathbb C^n\) satisfies
$$
\langle Uz,Uw\rangle=\langle z,w\rangle.
$$
Consequently, the map
$$
R_U:\mathfrak{H}^{n}\longrightarrow\mathfrak{H}^{n},
\qquad
R_U[z,t]=[Uz,t],
$$
is a Cygan isometry.

We next recall Sylvester's law of inertia for Hermitian forms. For a
Hermitian matrix \(A=A^*\in M_m(\mathbb C)\), its \emph{inertia} is
$$
\operatorname{In}(A)
=
\bigl(\nu_+(A),\nu_-(A),\nu_0(A)\bigr),
$$
where \(\nu_+(A)\), \(\nu_-(A)\), and \(\nu_0(A)\) denote the numbers of
positive, negative, and zero eigenvalues of \(A\), counted with multiplicity.
For a Hermitian form \(\mathcal B\) on a finite-dimensional complex vector
space, we use the same notation for the inertia of a matrix representing
\(\mathcal B\).

\begin{theorem}[Sylvester's law of inertia, \cite{HJ}]\label{Syl}
	Let $A=A^*\in M_m(\mathbb{C})$ and
	$S\in\operatorname{GL}_m(\mathbb{C})$. Then
	\[
	\operatorname{In}(S^*AS)
	=
	\operatorname{In}(A).
	\]
	Equivalently, every Hermitian form admits a basis in which its matrix is
	\[
	\operatorname{diag}(I_p,-I_q,0_r),
	\]
	where the integers $p$, $q$, and $r$ are independent of the choice of
	basis.
\end{theorem}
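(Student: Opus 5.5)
We prove the two formulations together, since the statement for $S^*AS$ is just the statement for one Hermitian form $\mathcal B(x,y)=y^*Ax$ on $\mathbb C^m$ written in two bases. The columns of $S$ form a new basis, and in that basis the matrix of $\mathcal B$ is $S^*AS$. So it suffices to show two things: every Hermitian form has a basis in which its matrix is $\operatorname{diag}(I_p,-I_q,0_r)$, and the integers $p,q,r$ can be described without reference to any basis.

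\emph{Existence.} By the spectral theorem there is a unitary $U$ with $U^*AU=D=\operatorname{diag}(\lambda_1,\dots,\lambda_m)$, where the $\lambda_j$ are real. Let $E$ be the diagonal matrix with $E_{jj}=|\lambda_j|^{-1/2}$ when $\lambda_j\neq0$ and $E_{jj}=1$ otherwise. Then $E^*DE$ is diagonal with entries in $\{1,-1,0\}$. Finally, conjugating by a permutation matrix $P$ puts it in the form $\operatorname{diag}(I_p,-I_q,0_r)$. Thus $S_0=UEP$ is the required change of basis. Since $U^*AU=D$, the numbers $p,q,r$ obtained this way are exactly $\nu_+(A),\nu_-(A),\nu_0(A)$.

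\emph{Invariance.} This is the only substantive step. We characterise $p$ intrinsically as the largest dimension of a subspace $W\subseteq\mathbb C^m$ on which $\mathcal B$ is positive definite. Take a basis $e_1,\dots,e_m$ in which $\mathcal B$ has matrix $\operatorname{diag}(I_p,-I_q,0_r)$.
\begin{itemize}
\item The span of $e_1,\dots,e_p$ is positive definite, so the maximum is at least $p$.
\item Conversely, let $W$ be positive definite and let $N$ be the span of $e_{p+1},\dots,e_m$. On $N$ we have $\mathcal B(x,x)=-\sum_{j=p+1}^{p+q}|x_j|^2\le0$, so $W\cap N=0$.
\item Hence $\dim W\le m-\dim N=p$.
\end{itemize}
The same argument applied to $-\mathcal B$ characterises $q$ as the largest dimension of a negative definite subspace. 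Lastly, $r=m-\operatorname{rank}A$, and $\operatorname{rank}(S^*AS)=\operatorname{rank}A$ because $S$ is invertible. Consequently $p,q,r$ depend only on $\mathcal B$, and $\operatorname{In}(S^*AS)=\operatorname{In}(A)$.
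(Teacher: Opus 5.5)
Your proof is correct and complete. The spectral theorem plus the rescaling $E$ gives the normal form with $(p,q,r)=\operatorname{In}(A)$. Characterising $p$ (resp.\ $q$) as the largest dimension of a subspace on which the form is positive (resp.\ negative) definite makes these integers independent of the basis. Applying the existence step to $S^*AS$, which represents the same form in the basis given by the columns of $S$, then yields $\operatorname{In}(S^*AS)=\operatorname{In}(A)$. You leave this last step implicit, but it follows directly from what you wrote.

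The paper gives no proof of this theorem; it quotes it from Horn and Johnson as a standard tool. Your argument is the usual textbook one, so there is nothing further to compare.
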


	\section{The upper bound}\label{sec-upper}

We prove Proposition~\ref{prop-upper}. The proof uses the following two
standard algebraic tools: the
\emph{conjugate vector space} of $V=\C^{n+1,1}$ and the Hermitian form
induced on the tensor product $V\otimes_\C\overline V$.  A standard
reference for the conjugate space construction is
\cite{Petersen}.

The conjugate vector space $\overline{V}$ is the
vector space whose elements corresponding to $v\in V$ are denoted by
$\overline{v}$, with addition and scalar multiplication defined by
\[
\overline{u}+\overline{v}
=
\overline{u+v},
\qquad
\lambda\overline{v}
=
\overline{\overline{\lambda}v},
\]
for $u,v\in V$ and $\lambda\in\mathbb{C}$.

Define
\[
W=V\otimes_{\mathbb{C}}\overline{V}.
\]
For pure tensors, define
\[
\mathcal{H}
\bigl(
u\otimes\overline{v},
x\otimes\overline{y}
\bigr)
=
\langle u,x\rangle_H
\overline{\langle v,y\rangle_H},
\qquad
u,v,x,y\in V.
\]

To see that this definition is compatible with scalar multiplication, let
$\lambda\in\mathbb{C}$. Since
$\lambda\overline{v}=\overline{\overline{\lambda}v}$, we have
\[
\begin{aligned}
	\mathcal{H}
	\bigl(
	(\lambda u)\otimes\overline{v},
	x\otimes\overline{y}
	\bigr)
	&=
	\lambda\langle u,x\rangle_H
	\overline{\langle v,y\rangle_H} \\
	&=
	\mathcal{H}
	\bigl(
	u\otimes(\lambda\overline{v}),
	x\otimes\overline{y}
	\bigr).
\end{aligned}
\]
The same argument applies to the second variable, and compatibility with
addition follows directly from the additivity of
$\langle\cdot,\cdot\rangle_H$. Hence the above prescription extends uniquely
to a sesquilinear form $\mathcal{H}$ on $W$, which is Hermitian.

We next determine the inertia of $\mathcal{H}$.
\begin{lemma}\label{lem-tensor-signature}
The Hermitian form \(\mathcal{H}\) on
\(W=V\otimes_{\mathbb{C}}\overline{V}\) satisfies
\[\nu_+(\mathcal H)=(n+1)^2+1,
\qquad
\nu_-(\mathcal H)=2n+2.\]
\end{lemma}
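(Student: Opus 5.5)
Choose a basis of $V$ in which $\langle\cdot,\cdot\rangle_H$ is diagonal. By Theorem~\ref{Syl} there is a basis $e_1,\dots,e_{n+2}$ of $V$ with $\langle e_a,e_b\rangle_H=\epsilon_a\delta_{ab}$, where $\epsilon_1=\dots=\epsilon_{n+1}=1$ and $\epsilon_{n+2}=-1$. An explicit choice is to keep the middle coordinates $\zeta$ and replace the pair $(z_0,z_{n+1})$ by $(\mathbf{q}_\infty\pm\mathbf{o})/\sqrt2$, where $\mathbf{o}=(0,\mathbf 0,1)^T$. Indeed, $\langle\mathbf{q}_\infty\pm\mathbf o,\mathbf{q}_\infty\pm\mathbf o\rangle_H=\pm2$ and the cross product vanishes.

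Since $\overline{\,\cdot\,}\colon V\to\overline V$ is a conjugate-linear bijection, the vectors $\overline{e_1},\dots,\overline{e_{n+2}}$ form a basis of $\overline V$. Consequently the $(n+2)^2$ pure tensors $e_a\otimes\overline{e_b}$ form a basis of $W$.

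Evaluate $\mathcal H$ on this basis directly from the definition:
\[
\mathcal H\bigl(e_a\otimes\overline{e_b},\,e_c\otimes\overline{e_d}\bigr)
=\langle e_a,e_c\rangle_H\,\overline{\langle e_b,e_d\rangle_H}
=\epsilon_a\epsilon_b\,\delta_{ac}\delta_{bd}.
\]
The $\epsilon$'s are real, so conjugation does nothing. The Gram matrix of $\mathcal H$ in this basis is therefore diagonal, with entry $\epsilon_a\epsilon_b$ at the basis vector $e_a\otimes\overline{e_b}$. In particular $\mathcal H$ is nondegenerate.

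It remains to count signs. The product $\epsilon_a\epsilon_b$ is negative exactly when one index equals $n+2$ and the other lies in $\{1,\dots,n+1\}$. This happens for $2(n+1)=2n+2$ ordered pairs $(a,b)$. Every other pair gives a positive entry: the $(n+1)^2$ pairs with $a,b\le n+1$, and the single pair $a=b=n+2$. Hence there are $(n+1)^2+1$ positive entries. Applying Theorem~\ref{Syl} (the inertia of $\mathcal H$ does not depend on the basis) gives
\[
\nu_+(\mathcal H)=(n+1)^2+1,\qquad \nu_-(\mathcal H)=2n+2 .
\]
As a check, these add up to $(n+2)^2=\dim W$.

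The only step that needs care is the first one: confirming that the $e_a\otimes\overline{e_b}$ really form a basis of $W$, and that the conjugation convention $\lambda\overline v=\overline{\overline\lambda v}$ does not change the entries. Because the basis is orthonormal with real $\epsilon_a$, no phases enter, and the sign count above is the entire content of the lemma.
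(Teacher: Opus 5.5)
Your proof is correct and matches the paper's argument: diagonalize $\langle\cdot,\cdot\rangle_H$ by Sylvester's law, pass to the basis $e_a\otimes\overline{e_b}$ of $W$, observe that $\mathcal H$ is diagonal with entries $\epsilon_a\epsilon_b$, and count the $2n+2$ ordered pairs with exactly one negative index. Your extras are valid but inessential: the explicit orthonormal basis built from $(\mathbf q_\infty\pm\mathbf o)/\sqrt2$, and the dimension check $(n+1)^2+1+(2n+2)=(n+2)^2$.
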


\begin{proof}
	By Sylvester's law of inertia (Theorem~\ref{Syl}), there exists a basis
	$u_0,\ldots,u_{n+1}$ of $V$ such that
	\[
	\langle u_a,u_b\rangle_H
	=
	\varepsilon_a\delta_{ab},
	\]
	where $\delta_{ab}$ denotes the Kronecker delta and
	\[
	\varepsilon_0=\cdots=\varepsilon_n=1,
	\qquad
	\varepsilon_{n+1}=-1.
	\]
	The vectors
	\[
	u_a\otimes\overline{u_b},
	\qquad
	0\leq a,b\leq n+1,
	\]
	form a basis of $W$, and
	\[
	\mathcal{H}
	\bigl(
	u_a\otimes\overline{u_b},
	u_c\otimes\overline{u_d}
	\bigr)
	=
	\varepsilon_a\varepsilon_b
	\delta_{ac}\delta_{bd}.
	\]
	Hence
	\[
	\nu_+(\mathcal{H})=(n+1)^2+1,
	\qquad
	\nu_-(\mathcal{H})=2n+2.
	\]
\end{proof}

\begin{proof}[Proof of Proposition~\ref{prop-upper}]
	It suffices to consider a finite equilateral set
	\[ E=\{p_1,\ldots,p_m\}\subset\mathfrak{H}^{n}.\]
	By applying a dilation, we may assume that the common Cygan distance is $1$.
		
	For $1\leq i\leq m$, let $\xi_i$ be the standard lift of $p_i$, and set
	$\xi_0=\mathbf{q}_\infty$. According to Section~\ref{sec-prelim}, the $\xi_i$ are nonzero null vectors, and the Cygan distance is expressed in terms of the Hermitian product $\langle\cdot,\cdot\rangle_H$. Hence
	\begin{equation}\label{eq-null-lift-relations}
		\langle \xi_i,\xi_i\rangle_H=0,
		\qquad
		\left|
		\langle \xi_i,\xi_j\rangle_H
		\right|
		=
		1
		\quad
		(0\leq i\neq j\leq m).
	\end{equation}
	
	The relations \eqref{eq-null-lift-relations} determine only the moduli of the Hermitian products
	\(\langle\xi_i,\xi_j\rangle_H\), so the Gram matrix of the lifts is not determined by the equilateral condition. Therefore, we pass to \(V\otimes_{\mathbb C}\overline V\) to remove this phase ambiguity.
	 
	For $0\leq i\leq m,$ let $X_i=\xi_i\otimes\overline{\xi_i}.$ Then
	\[
	\mathcal{H}(X_i,X_j)
	=
	\langle \xi_i,\xi_j\rangle_H
	\overline{\langle \xi_i,\xi_j\rangle_H}
	=
	\left|\langle \xi_i,\xi_j\rangle_H\right|^2.
	\]
	By the relations \eqref{eq-null-lift-relations}, the Gram matrix of
	$X_0,\ldots,X_m$ is
	\[
	C=J_{m+1}-I_{m+1},
	\]
	where all entries of $J_{m+1}$ are equal to 1.
	
	Let $\mathbf{1}=(1,\ldots,1)^{\top}\in\mathbb{C}^{m+1}$ and
	$y\in\mathbf{1}^{\perp}$. It is easy to see that
	\[
	C\mathbf{1}=m\mathbf{1},
	\qquad
	Cy=-y.
	\]
	Since $\dim\mathbf{1}^{\perp}=m$, it follows that $\nu_-(C)=m$.
	
It remains to show that $\nu_-(C)\leq\nu_-(\mathcal{H})$.
Define
$$
T:\mathbb{C}^{m+1}\longrightarrow W,
\qquad
T(y_0,\ldots,y_m)
=
\sum_{i=0}^{m}y_iX_i.
$$
For $y\in\mathbf{1}^{\perp}$, we have
\begin{align*}
	\mathcal{H}(Ty,Ty)
	&=
	\sum_{i,j=0}^{m}
	y_i\overline{y_j}\mathcal{H}(X_i,X_j)\\
	&=
	y^*Cy
	=
	-\|y\|^2.
\end{align*}
Hence $T$ is injective on $\mathbf{1}^{\perp}$, and
$T(\mathbf{1}^{\perp})$ is an $m$-dimensional negative definite subspace of $W$. Therefore
$$
m
=
\nu_-(C)
\leq
\nu_-(\mathcal{H})
=
2n+2.
$$
This proves the proposition.
\end{proof}

\section{The lower bound}\label{sec-lower}

We prove Proposition~\ref{prop-lower} by an explicit construction. The construction is organized as follows.
Lemma~\ref{lemma-lower-reduction} transforms the construction of the
equilateral set into a configuration problem of \(2n\) vectors in the
\(n\)-dimensional complex vector space \(\mathbb C^n\).
Lemma~\ref{lem-first-family} gives the explicit construction of the
first \(n\) vectors. Lemma~\ref{lem-compatible-vector} characterizes
the vectors compatible with the first family, and
Lemma~\ref{lem-second-family} gives the construction of the remaining
\(n\) vectors. The proof of Proposition~\ref{prop-lower} at the end of
the section combines these ingredients into the required
\((2n+2)\)-point equilateral set, treating the case \(n=1\) separately.
\begin{lemma}\label{lemma-lower-reduction}
	Let
	\[
	p_{+}=[\mathbf 0,\tfrac12],\qquad
	p_{-}=[\mathbf 0,-\tfrac12],\qquad
	p_i=[\mathbf z_i,0],\quad i=1,\ldots,2n.
	\]
	Then \(p_{+},p_{-},p_1,\ldots,p_{2n}\) form an equilateral set
	with common Cygan distance \(1\) if and only if
	\begin{align}
		\|\mathbf z_i\|^2&=\frac{\sqrt3}{2},
		&&i=1,\ldots,2n, \label{eqA}\\
		\left|\ip{\mathbf z_i}{\mathbf z_j}-\frac{\sqrt3}{2}\right|^2&=\frac14,
		&&1\leq i<j\leq 2n. \label{eqB}
	\end{align}
\end{lemma}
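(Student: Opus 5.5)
The plan is to compute every pairwise distance directly from the explicit formula
\[
d_{\mathrm{Cyg}}\bigl([z,t],[w,s]\bigr)^4=\|z-w\|^4+\bigl(t-s+2\operatorname{Im}\langle z,w\rangle\bigr)^2
\]
from Section~\ref{sec-prelim}. Each of the three types of pairs gives one condition, and we will show these conditions are exactly \eqref{eqA} and \eqref{eqB}.

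\emph{The pair $p_+,p_-$.} Here $z=w=\mathbf 0$ and $t-s=1$, so the distance is $1^{1/2}=1$ automatically. This pair imposes no condition.

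\emph{A vertical point and a horizontal point.} For $p_i$ and $p_\pm$ we have
\[
d^4=\|\mathbf z_i\|^4+\tfrac14,
\]
because the imaginary part term vanishes when one argument is $\mathbf 0$. So $d=1$ if and only if $\|\mathbf z_i\|^4=\tfrac34$, that is, $\|\mathbf z_i\|^2=\tfrac{\sqrt3}{2}$. This is \eqref{eqA}, and the same condition arises for both $p_+$ and $p_-$.

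\emph{Two horizontal points.} For $p_i,p_j$ with $i\neq j$,
\[
d^4=\|\mathbf z_i-\mathbf z_j\|^4+4\bigl(\operatorname{Im}\langle\mathbf z_i,\mathbf z_j\rangle\bigr)^2 .
\]
Assuming \eqref{eqA}, we get $\|\mathbf z_i-\mathbf z_j\|^2=\sqrt3-2\operatorname{Re}\langle\mathbf z_i,\mathbf z_j\rangle$. Writing $a=\operatorname{Re}\langle\mathbf z_i,\mathbf z_j\rangle$ and $b=\operatorname{Im}\langle\mathbf z_i,\mathbf z_j\rangle$, this gives
\[
d^4=(\sqrt3-2a)^2+4b^2=4\Bigl|\langle\mathbf z_i,\mathbf z_j\rangle-\tfrac{\sqrt3}{2}\Bigr|^2 .
\]
Hence $d=1$ if and only if \eqref{eqB} holds. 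One could equally read this off from $d=|\langle\mathbf p,\mathbf q\rangle_H|^{1/2}$ together with $\langle\mathbf p,\mathbf q\rangle_H=-\|z-w\|^2+2i\operatorname{Im}\langle z,w\rangle$.

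Combining the three cases gives both directions. The only point needing care is the order of implications: the reduction in the horizontal case uses \eqref{eqA}. This is harmless, since in either direction \eqref{eqA} is available first, being equivalent to the mixed-pair conditions. There is no real obstacle beyond this bookkeeping.
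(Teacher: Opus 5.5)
Your proposal is correct and follows essentially the same route as the paper: you compute $d_{\mathrm{Cyg}}^4$ directly for the three types of pairs, and in the horizontal case you use \eqref{eqA} to reduce the distance to $4\bigl|\langle\mathbf z_i,\mathbf z_j\rangle-\tfrac{\sqrt3}{2}\bigr|^2$. Your explicit note on the order of implications (that \eqref{eqA} comes from the mixed pairs before it is used for the horizontal pairs) is something the paper leaves implicit, and the paper's extra remark about the bisector of $p_\pm$ is not needed for the lemma as stated.
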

\begin{proof}
	The bisector of \(p_{+}\) and \(p_{-}\) is the horizontal subspace
	\(\C^n\times\{0\}\), since for \(p=[\mathbf z,t]\),
	\[
	d_{\mathrm{Cyg}}(p,p_\pm)^4=\|\mathbf z\|^4+\left(t\mp\frac12\right)^2,
	\]
	and these two quantities are equal if and only if \(t=0\).
	
	It is easy to see that $d_{\mathrm{Cyg}}(p_+,p_-)^4=1$. For \(p_i=[\mathbf z_i,0]\), we have
	\[
	d_{\mathrm{Cyg}}(p_i,p_{\pm})^4
	=
	\|\mathbf z_i\|^4+\frac14
	=
	1
	\quad\Longleftrightarrow\quad
	\|\mathbf z_i\|^2=\frac{\sqrt3}{2}.
	\]
	Using  \(\|\mathbf z_i\|^2=\|\mathbf z_j\|^2=\frac{\sqrt3}{2}\), we have
	\[
	d_{\mathrm{Cyg}}(p_i,p_j)^4
	=
	4\left|\ip{\mathbf z_i}{\mathbf z_j}-\frac{\sqrt3}{2}\right|^2.
	\]
	Hence
	\[
	d_{\mathrm{Cyg}}(p_i,p_j)=1
	\quad\Longleftrightarrow\quad
	\left|
	\ip{\mathbf z_i}{\mathbf z_j}
	-
	\frac{\sqrt3}{2}
	\right|^2
	=
	\frac14.
	\]
	This proves the equivalence.
\end{proof}

For the remainder of the construction, we assume that \(n\geqslant2\). The case \(n=1\) will be treated separately in the proof of Proposition~\ref{prop-lower}.

According to Section~\ref{sec-prelim}, the action of \(U(n)\) on \(\mathbb C^n\) preserves the Hermitian product and induces Cygan isometries of \(\mathfrak H^n\). In particular, the conditions \eqref{eqA} and \eqref{eqB} are \(U(n)\)-invariant. This suggests looking for the first \(n\) vectors in a convenient triangular form. The following lemma gives an explicit construction.

\begin{lemma}\label{lem-first-family}
	For \(1\leqslant k\leqslant n\), define
	\[
	\mathbf z_k
	=
	(d_1,\ldots,d_{k-1},s_k,0,\ldots,0)^T
	\in\mathbb R^n,
	\]
	where
	\[
	d_k=\sqrt{\frac1{2(k+\alpha)(k+\alpha-1)}},
	\qquad
	s_k=\sqrt{\frac{k+\alpha}{2(k+\alpha-1)}}, \qquad \alpha=\frac{\sqrt3+1}{2}.
	\]
	Then $\mathbf z_1,\ldots,\mathbf z_n$ satisfy equation \eqref{eqA}, and satisfy
	equation \eqref{eqB} for $1\leqslant i<j\leqslant n$.
\end{lemma}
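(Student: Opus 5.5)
The plan is a direct computation: both conditions reduce to telescoping sums once the squared entries are put in partial-fraction form. Since all the vectors \(\mathbf z_k\) are real, \(\ip{\mathbf z_i}{\mathbf z_j}\) is just the real dot product, and no phases appear.

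\emph{Step 1 (partial fractions).} For every \(k\geqslant1\) I will record three identities:
\[
d_k^2=\frac12\Bigl(\frac1{k+\alpha-1}-\frac1{k+\alpha}\Bigr),\qquad
s_k^2=\frac12+\frac1{2(k+\alpha-1)},\qquad
s_kd_k=\frac1{2(k+\alpha-1)}.
\]
The third follows from
\[
s_k^2d_k^2=\frac{k+\alpha}{2(k+\alpha-1)}\cdot\frac{1}{2(k+\alpha)(k+\alpha-1)}=\frac{1}{4(k+\alpha-1)^2}
\]
together with positivity of \(s_k\) and \(d_k\). All quantities are well defined because \(\alpha>1\).

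\emph{Step 2 (the telescoping sum).} For \(k\geqslant1\),
\[
\sum_{l=1}^{k-1}d_l^2=\frac12\Bigl(\frac1\alpha-\frac1{k+\alpha-1}\Bigr),
\]
where the empty sum at \(k=1\) is consistent with this formula.

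\emph{Step 3 (condition \eqref{eqA}).} Adding \(s_k^2\) to the sum in Step 2, the terms \(\pm\frac1{2(k+\alpha-1)}\) cancel, so
\[
\|\mathbf z_k\|^2=\frac1{2\alpha}+\frac12 .
\]
With \(\alpha=\frac{\sqrt3+1}2\) we have \(1/\alpha=\frac{2}{\sqrt3+1}=\sqrt3-1\). Hence \(\|\mathbf z_k\|^2=\frac{\sqrt3-1}2+\frac12=\frac{\sqrt3}2\), which is \eqref{eqA}.

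\emph{Step 4 (condition \eqref{eqB}).} Let \(1\leqslant i<j\leqslant n\). The vector \(\mathbf z_i\) is supported on the first \(i\) coordinates, where \(\mathbf z_j\) has entries \(d_1,\dots,d_i\). Therefore
\[
\ip{\mathbf z_i}{\mathbf z_j}=\sum_{l=1}^{i-1}d_l^2+s_id_i .
\]
By Steps 1 and 2 this equals \(\frac1{2\alpha}=\frac{\sqrt3-1}2\), independently of \(i\) and \(j\). Consequently
\[
\ip{\mathbf z_i}{\mathbf z_j}-\frac{\sqrt3}2=-\frac12,
\]
and its squared modulus is \(\frac14\), which is \eqref{eqB}.

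The only place requiring care is spotting the partial-fraction form of \(d_k^2\) and the identity \(s_kd_k=\frac1{2(k+\alpha-1)}\). These make both the norm and the Gram entries telescope. After that, the specific value of \(\alpha\) enters only through \(1/\alpha=\sqrt3-1\).
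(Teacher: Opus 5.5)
Your proposal is correct and follows the paper's proof essentially verbatim. The paper also uses the partial-fraction form of \(d_k^2\) and the identity \(d_ks_k=\frac1{2(k+\alpha-1)}\) to telescope \(\|\mathbf z_k\|^2\) down to \(\frac1{2\alpha}+\frac12\) and \(\langle\mathbf z_j,\mathbf z_k\rangle\) down to \(\frac1{2\alpha}\); your explicit formula for \(s_k^2\) and the evaluation \(1/\alpha=\sqrt3-1\) simply spell out steps the paper leaves as a direct computation.
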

\begin{proof}
	Notice that
	\[
	d_i^2
	=
	\frac12
	\left(
	\frac1{i+\alpha-1}
	-
	\frac1{i+\alpha}
	\right),
	\qquad
	d_is_i
	=
	\frac1{2(i+\alpha-1)}.
	\]
	A direct computation gives
	\begin{align*}
		\|\mathbf z_k\|^2
		&=
		\sum_{i=1}^{k-1}d_i^2+s_k^2
		=
		\frac1{2\alpha}+\frac12
		=
		\frac{\sqrt3}{2},\\
		\langle\mathbf z_j,\mathbf z_k\rangle
		&=
		\sum_{i=1}^{j-1}d_i^2+d_js_j
		=
		\frac1{2\alpha}
		=
		\frac{\sqrt3-1}{2},
		\qquad j<k.
	\end{align*}
	Therefore, \(\mathbf z_1,\ldots,\mathbf z_n\) satisfy equation \eqref{eqA}, and equation \eqref{eqB} holds for \(1\leqslant j<k\leqslant n\).
\end{proof}
\begin{remark}
	Let
	\[
	\mathsf Z_1
	=
	[\mathbf z_1\mid\cdots\mid\mathbf z_n].
	\]
	By Lemma~\ref{lem-first-family}, its Gram matrix is
	\[
	\mathsf G=\mathsf Z_1^T\mathsf Z_1=\frac12 I_n+\frac{\sqrt3-1}{2}J_n,
	\]
	where \(J_n\) denotes the \(n\times n\) all-ones matrix.
	The matrix \(\mathsf G\) is positive definite. By the uniqueness of the Cholesky factorization, the upper-triangular factor with positive diagonal entries is precisely \(\mathsf Z_1\).
\end{remark}

To construct the remaining \(n\) vectors, we first characterize those vectors that satisfy equation \eqref{eqB} with \(\mathbf z_1,\ldots,\mathbf z_n\).
\begin{lemma}\label{lem-compatible-vector}
	A vector \(\mathbf z\in\mathbb C^n\) satisfies equation \eqref{eqB} with
	\(\mathbf z_1,\ldots,\mathbf z_n\) if and only if
	\[
	\mathbf z
	=
	\mathsf Z_1^{-T}\mathbf b,
	\]
	where
	\[
	\mathbf b
	=
	(b_1,\ldots,b_n)^T,
	\qquad
	b_i
	=
	\frac{\sqrt3}{2}+\frac12u_i,
	\qquad
	|u_i|=1,
	\qquad
	i=1,\ldots,n.
	\]
\end{lemma}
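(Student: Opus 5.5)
The plan is to turn the condition into a statement about the vector of Hermitian products $c_i=\ip{\mathbf z}{\mathbf z_i}$, $i=1,\ldots,n$, and then use that $\mathsf Z_1$ is invertible to recover $\mathbf z$ from these products. Here ``satisfies equation \eqref{eqB} with $\mathbf z_1,\ldots,\mathbf z_n$'' is read as
\[
\left|\ip{\mathbf z}{\mathbf z_i}-\tfrac{\sqrt3}{2}\right|^2=\tfrac14
\quad\text{for } i=1,\ldots,n;
\]
the order of the two vectors in the product is immaterial, since conjugation does not change the modulus of $c-\tfrac{\sqrt3}{2}$ when $\tfrac{\sqrt3}{2}$ is real.

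First I record the elementary scalar equivalence. A complex number $c$ satisfies $|c-\tfrac{\sqrt3}{2}|=\tfrac12$ if and only if $c=\tfrac{\sqrt3}{2}+\tfrac12u$ for some $u$ with $|u|=1$. Consequently $\mathbf z$ satisfies \eqref{eqB} with every $\mathbf z_i$ if and only if the vector $\mathbf b=(c_1,\ldots,c_n)^T$ has exactly the form stated in the lemma.

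Next I express $\mathbf b$ through $\mathsf Z_1$. With the convention $\ip{\zeta}{\eta}=\eta^*\zeta$ and $\mathbf z_i$ real, we have $c_i=\mathbf z_i^T\mathbf z$, hence $\mathbf b=\mathsf Z_1^T\mathbf z$. By the Remark after Lemma~\ref{lem-first-family}, $\mathsf Z_1$ is upper triangular with positive diagonal entries $s_k$, so it is invertible. The map $\mathbf z\mapsto\mathsf Z_1^T\mathbf z$ is therefore a bijection of $\C^n$ with inverse $\mathbf b\mapsto\mathsf Z_1^{-T}\mathbf b$.

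Combining the two steps gives both directions. If $\mathbf z$ is compatible, then $\mathbf b:=\mathsf Z_1^T\mathbf z$ has entries $\tfrac{\sqrt3}{2}+\tfrac12u_i$ and $\mathbf z=\mathsf Z_1^{-T}\mathbf b$. Conversely, if $\mathbf z=\mathsf Z_1^{-T}\mathbf b$ with $\mathbf b$ of the stated form, then $\ip{\mathbf z}{\mathbf z_i}=b_i$, so \eqref{eqB} holds.

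There is no real obstacle here. The only points needing care are the conjugation convention and the fact that $\mathsf Z_1$ is real, so that a transpose rather than a conjugate transpose appears. The lemma deliberately says nothing about $\|\mathbf z\|^2$, so condition \eqref{eqA} is not part of the claim and is presumably imposed later.
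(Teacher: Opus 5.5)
Your proposal is correct and follows the paper's proof: you rewrite \eqref{eqB} as $\ip{\mathbf z}{\mathbf z_i}=\tfrac{\sqrt3}{2}+\tfrac12u_i$ with $|u_i|=1$, use that the $\mathbf z_i$ are real to get $\mathsf Z_1^T\mathbf z=\mathbf b$, and then invert $\mathsf Z_1$. Your remarks that the order of the two vectors in \eqref{eqB} does not matter and that $\mathsf Z_1$ is triangular with positive diagonal make explicit two small points the paper leaves implicit.
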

\begin{proof}
	Equation \eqref{eqB} is equivalent to the existence of unit complex numbers
	\(u_1,\ldots,u_n\) such that
	\[
	\langle\mathbf z,\mathbf z_i\rangle
	=
	\frac{\sqrt3}{2}+\frac12u_i
	=
	b_i,
	\qquad
	i=1,\ldots,n.
	\]
	Since \(\mathbf z_1,\ldots,\mathbf z_n\) are real, these relations are equivalent to
	\[
	\mathsf Z_1^T\mathbf z=\mathbf b.
	\]
	As \(\mathsf Z_1\) is invertible, this is equivalent to
	\[
	\mathbf z=\mathsf Z_1^{-T}\mathbf b.
	\]
\end{proof}
We now use Lemma~\ref{lem-compatible-vector} to construct the remaining \(n\) vectors. We choose the corresponding phase vectors in a symmetric form involving only two unit complex numbers.
\begin{lemma}\label{lem-second-family}
	Let
	\begin{align}\label{unit}
		v=\frac{
			-\left(n-1+\frac{1}{2\sqrt3}\right)
			+i\sqrt{\,n-\frac{n-1}{\sqrt3}-\frac{1}{12}\,}
		}{
			n-1+e^{i\pi/3}
		},
		\qquad
		u
		=
		ve^{i\pi/3}.
	\end{align}
	Then $u$ and $v$ are unit complex numbers.
	
	For \(1\leqslant i\leqslant n\), define
	\begin{align}\label{the-second-family}
		\mathbf z_{n+i}	=\mathsf Z_1^{-T}\mathbf b^{(i)},
	\end{align}
	where
	\[
	b^{(i)}_j
	=
	\begin{cases}
		\frac12(\sqrt3+u), & j=i,\\[2ex]
		\frac12(\sqrt3+v), & j\neq i.
	\end{cases}
	\]
	Then \(\mathbf z_{n+1},\ldots,\mathbf z_{2n}\) satisfy
	equation \eqref{eqA}, and satisfy equation \eqref{eqB} for
	\(n+1\leqslant i<j\leqslant2n\).
\end{lemma}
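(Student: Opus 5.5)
The plan is to compute the Gram matrix of the second family in closed form, using the structure of $\mathsf G$, and then reduce everything to a single scalar identity in $v$.

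\textbf{Unit modulus.} First I check that $|v|=1$; then $|u|=1$ is automatic, since $u=ve^{i\pi/3}$. The radicand $n-\frac{n-1}{\sqrt3}-\frac1{12}$ is positive for $n\geqslant2$, so the numerator of $v$ has real part $-(n-1+\frac1{2\sqrt3})$ and imaginary part the square root of that radicand. Its squared modulus is
\[
\Bigl(n-1+\tfrac1{2\sqrt3}\Bigr)^2+n-\tfrac{n-1}{\sqrt3}-\tfrac1{12}=(n-1)^2+n .
\]
The denominator satisfies $|n-1+e^{i\pi/3}|^2=(n-1)^2+(n-1)+1$, which is the same quantity. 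Hence $|v|=1$.

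\textbf{Gram matrix in closed form.} Since $\mathbf z_{n+i}=\mathsf Z_1^{-T}\mathbf b^{(i)}$ and $\mathsf Z_1$ is real,
\[
\ip{\mathbf z_{n+i}}{\mathbf z_{n+j}}=\mathbf b^{(j)*}\mathsf G^{-1}\mathbf b^{(i)} .
\]
With $c=\frac{\sqrt3-1}{2}$, the remark after Lemma~\ref{lem-first-family} gives $\mathsf G=\frac12I_n+cJ_n$, so
\[
\mathsf G^{-1}=2\bigl(I_n-\kappa J_n\bigr),\qquad \kappa=\frac{2c}{1+2cn}.
\]
Write $\mathbf b^{(i)}=\beta\one+\gamma e_i$ with $\beta=\frac12(\sqrt3+v)$ and $\gamma=\frac12(u-v)$. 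A direct expansion gives
\[
\mathbf b^{(j)*}\mathbf b^{(i)}=n|\beta|^2+2\Re(\overline\beta\gamma)+|\gamma|^2\delta_{ij},
\qquad
\one^T\mathbf b^{(i)}=n\beta+\gamma .
\]
Therefore
\[
\ip{\mathbf z_{n+i}}{\mathbf z_{n+j}}=2\Bigl[n|\beta|^2+2\Re(\overline\beta\gamma)+|\gamma|^2\delta_{ij}-\kappa|n\beta+\gamma|^2\Bigr].
\]
This is real, and it takes one value $D$ on the diagonal and one value $O$ off the diagonal.

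\textbf{Reduction to a single equation.} The gap between the two values is
\[
D-O=2|\gamma|^2=\tfrac12|e^{i\pi/3}-1|^2=\tfrac12 .
\]
So once $D=\frac{\sqrt3}{2}$, which is \eqref{eqA}, we automatically get $O=\frac{\sqrt3-1}{2}$. Then $O-\frac{\sqrt3}{2}=-\frac12$ has modulus $\frac12$, which is \eqref{eqB} for $n+1\leqslant i<j\leqslant 2n$. The problem is thus reduced to the single real identity $D=\frac{\sqrt3}2$.

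\textbf{Verifying $D=\frac{\sqrt3}{2}$.} This step is the main obstacle. The key simplification is
\[
n\beta+\gamma=\tfrac12\bigl(n\sqrt3+v(n-1+e^{i\pi/3})\bigr)=\tfrac12\bigl(n\sqrt3+N\bigr),
\]
where $N$ is the explicit numerator in \eqref{unit}. Hence $|n\beta+\gamma|^2$ is an explicit real number. The remaining terms depend only on $\Re v$ and $\Re(ve^{i\pi/3})$:
\[
|\beta|^2=1+\tfrac{\sqrt3}{2}\Re v,
\qquad
2\Re(\overline\beta\gamma)=\tfrac12\Re\bigl((\sqrt3+\overline v)(u-v)\bigr).
\]
I would compute $\Re v=\Re\bigl(N\,\overline{(n-1+e^{i\pi/3})}\bigr)/\bigl((n-1)^2+n\bigr)$ explicitly, and similarly $\Re u$, then substitute and clear the denominator $1+2cn=1+(\sqrt3-1)n$. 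The expected difficulty is purely the bookkeeping of the $\sqrt3$ terms and the square root in $\Im N$. I expect the square-root terms to cancel, because they enter only through $\Re$ of products with real coefficients apart from the $e^{i\pi/3}$ factors. If they do not cancel directly, I would instead solve $D=\frac{\sqrt3}{2}$ as a linear equation in $\Re v$, using $\Re u=\frac12\Re v-\frac{\sqrt3}{2}\Im v$ and $|v|=1$, and then check that the stated $v$ satisfies it.
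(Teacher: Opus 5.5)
Your reduction is sound, and it is the paper's argument written entrywise rather than through the projections $P=I_n-\frac1nJ_n$, $Q=\frac1nJ_n$. The Gram entries $\mathbf b^{(j)*}\mathsf G^{-1}\mathbf b^{(i)}$, the inverse $\mathsf G^{-1}=2(I_n-\kappa J_n)$, the gap $D-O=2|\gamma|^2=\frac12$ (which is the paper's $P$-coefficient $\frac12|u-v|^2$), and the reduction of the lemma to $D=\frac{\sqrt3}{2}$ are all correct.

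\textbf{The gap.} The proposal stops exactly where the lemma has content. Everything you actually establish holds for every unit $v$ with $u=ve^{i\pi/3}$. The identity $D=\frac{\sqrt3}{2}$ is the only place where the explicit formula for $v$ matters, and you neither compute $|n\beta+\gamma|^2$ nor carry out the substitution; you only expect the square roots to cancel. The reason you give for that expectation does not hold. With $w=n-1+e^{i\pi/3}$, $\Re v=\Re(N\overline w)/|w|^2$ contains a term proportional to $\Im N$ because $\Im w\neq0$, and $\Re u$ contains one too. Computing them separately therefore does bring in the radical. Your fallback is also not linear in $\Re v$ alone, since $\Im v$ enters through $\Re u$.

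\textbf{How to close it.} The missing step is short once you group terms. Since $|v|=1$,
\[
\|\mathbf b^{(i)}\|^2=(n-1)|\beta|^2+|\beta+\gamma|^2=n+\tfrac{\sqrt3}{2}\Re\bigl(u+(n-1)v\bigr)=n+\tfrac{\sqrt3}{2}\Re N=n-\tfrac{\sqrt3}{2}(n-1)-\tfrac14 .
\]
So the radical cancels in the combination $u+(n-1)v=N$. For the other term,
\[
|n\sqrt3+N|^2=\Bigl(n\sqrt3-n+1-\tfrac1{2\sqrt3}\Bigr)^2+n-\tfrac{n-1}{\sqrt3}-\tfrac1{12}=(n\sqrt3-n+1)^2 .
\]
Since $1+2cn=n\sqrt3-n+1$, this gives $\kappa|n\beta+\gamma|^2=\frac14(\sqrt3-1)(n\sqrt3-n+1)$. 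Hence $D=2\bigl[n-\frac{\sqrt3}{2}(n-1)-\frac14-\frac14(\sqrt3-1)(n\sqrt3-n+1)\bigr]=\frac{\sqrt3}{2}$.

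Equivalently, the orthogonal splitting $\mathbf b^{(i)}=\gamma(e_i-\frac1n\one)+\frac1n(n\beta+\gamma)\one$ gives $D=\frac12(1-\frac1n)+\frac{|n\sqrt3+N|^2}{2n(n\sqrt3-n+1)}$. This shows that, beyond $|u-v|=1$, the only fact about $v$ needed is the paper's identity $|n\sqrt3+u+(n-1)v|=n\sqrt3-n+1$. With this computation added, your proof is complete.
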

\begin{proof}
	Let
	\[
	\mathsf B
	=
	[\mathbf b^{(1)}\mid\cdots\mid\mathbf b^{(n)}],
	\qquad
	\mathsf Z_2
	=
	[\mathbf z_{n+1}\mid\cdots\mid\mathbf z_{2n}].
	\]
	By the definition of \(\mathbf b^{(i)}\), we have
	\[
	\mathsf B
	=
	\frac{u-v}{2}I_n
	+
	\frac{\sqrt3+v}{2}J_n\qquad\text{ and }\qquad\mathsf Z_2
	=
	\mathsf Z_1^{-T}\mathsf B.
	\]
	To verify equations \eqref{eqA} and \eqref{eqB} for the second family, it is enough to show that
	\[
	\|\mathbf z_{n+i}\|^2
	=
	\frac{\sqrt3}{2},
	\qquad
	\langle\mathbf z_{n+i},\mathbf z_{n+j}\rangle
	=
	\frac{\sqrt3-1}{2},
	\qquad
	1\leqslant i<j\leqslant n.
	\]
	Equivalently, its Gram matrix is \(\mathsf G\), that is,
\[
\mathsf Z_2^{*}\mathsf Z_2=\mathsf B^{*}\mathsf Z_1^{-1}\mathsf Z_1^{-T}\mathsf B
=\mathsf B^{*}\mathsf G^{-1}\mathsf B=\mathsf G.
\]

Let
\[P=I_n-\frac1nJ_n,\qquad Q=\frac1nJ_n.\]
It is easy to see that
\[P^2=P,\qquad Q^2=Q,\qquad PQ=QP=0,\qquad P+Q=I_n.\]
Therefore, \(\mathsf G\), \(\mathsf B\), and \(\mathsf G^{-1}\) admit the decompositions
\begin{align*}
	\mathsf G
	&=
	\frac12P
	+
	\frac{n\sqrt3-n+1}{2}Q,\\
	\mathsf B
	&=
	\frac{u-v}{2}P
	+
	\frac{n\sqrt3+u+(n-1)v}{2}Q,\\
	\mathsf G^{-1}
	=
	2I_n&
	-
	\frac{2(\sqrt3-1)}{n\sqrt3-n+1}J_n=
	2P
	+
	\frac{2}{n\sqrt3-n+1}Q.
\end{align*}
Substituting these decompositions into
\(\mathsf B^{*}\mathsf G^{-1}\mathsf B=\mathsf G\), we obtain
\[
\frac{|u-v|^2}{2}P
+
\frac{
	|n\sqrt3+u+(n-1)v|^2
}{
	2(n\sqrt3-n+1)
}Q
=
\frac12P
+
\frac{n\sqrt3-n+1}{2}Q.
\]
Since \(P\) and \(Q\) are linearly independent, this is equivalent to
\begin{align}
	|u-v| &= 1, \label{eq-P}\\
	\left|n\sqrt3+u+(n-1)v\right| &= n\sqrt3-n+1. \label{eq-Q}
\end{align}

It remains to verify equations \eqref{eq-P} and \eqref{eq-Q}. By the definition of
\(u\) and \(v\) in equation \eqref{unit},
	\[
	v
	=
	\frac{
		-\left(n-1+\frac{1}{2\sqrt3}\right)
		+
		i\sqrt{
			n-\frac{n-1}{\sqrt3}-\frac{1}{12}
		}
	}{
		n-1+e^{i\pi/3}
	},
	\qquad
	u
	=
	ve^{i\pi/3}.
	\]
For \(n\geqslant2\), it is easy to check that
\[
\begin{gathered}
	n-\frac{n-1}{\sqrt3}-\frac{1}{12}>0,\\[1ex]
	\left|n-1+e^{i\pi/3}\right|^2
	=
	n^2-n+1,\\[1ex]
	\left|
	-\left(n-1+\frac{1}{2\sqrt3}\right)
	+
	i\sqrt{
		n-\frac{n-1}{\sqrt3}-\frac{1}{12}
	}
	\right|^2
	=
	n^2-n+1.
\end{gathered}
\]
Hence \(|u|=|v|=1\), and
\(|u-v|=|v|\,|e^{i\pi/3}-1|=1\), so equation \eqref{eq-P} holds.
	
	On the other hand,
	\[
	\begin{aligned}
		u+(n-1)v
		&=
		v\left(e^{i\pi/3}+n-1\right)\\
		&=
		-\left(n-1+\frac{1}{2\sqrt3}\right)
		+
		i\sqrt{
			n-\frac{n-1}{\sqrt3}-\frac{1}{12}
		}.
	\end{aligned}
	\]
Hence
\[
\begin{aligned}
	|n\sqrt3+u+(n-1)v|^2
	&=
	\left(
	n\sqrt3-n+1-\frac{1}{2\sqrt3}
	\right)^2
	+
	n-\frac{n-1}{\sqrt3}-\frac{1}{12}\\
	&=
	(n\sqrt3-n+1)^2.
\end{aligned}
\]
Therefore equation \eqref{eq-Q} holds.
\end{proof}

\begin{proof}[Proof of Proposition~\ref{prop-lower}]
	\textbf{The case \(n=1\).} Consider the four points
	\[
	p_+=[0,\tfrac12],
	\qquad
	p_-=[0,-\tfrac12],
	\qquad
	p_1=[r,0],
	\qquad
	p_2=[re^{i\theta},0],
	\]
	where \(r=(3/4)^{1/4}\) and \(\cos\theta=\frac56\). It is easy to
	see that they are pairwise at Cygan distance \(1\), so \(\fH^1\)
	contains an equilateral set of cardinality \(4=2n+2\).
	
	\textbf{The case \(n\geqslant2\).} The construction consists of the
	following three layers:
	\begin{enumerate}
		\item the two vertical points
		\(p_\pm=[\mathbf 0,\pm\tfrac12]\);
		\item the vectors \(\mathbf z_1,\ldots,\mathbf z_n\) given by
		Lemma~\ref{lem-first-family};
		\item the vectors \(\mathbf z_{n+1},\ldots,\mathbf z_{2n}\) given
		by Lemma~\ref{lem-second-family}.
	\end{enumerate}
	
	With the first layer fixed, Lemma~\ref{lemma-lower-reduction} reduces
	the task to verifying equation \eqref{eqA} for each vector and
	equation \eqref{eqB} for each pair of vectors. By
	Lemma~\ref{lem-first-family}, the first family satisfies
	equations \eqref{eqA} and \eqref{eqB}. Since \(u\) and \(v\) are
	unit complex numbers, every component of each \(\mathbf b^{(i)}\) is
	of the form \(\frac{\sqrt3}{2}+\frac12\omega\) with
	\(|\omega|=1\); by Lemma~\ref{lem-compatible-vector}, each
	\(\mathbf z_{n+i}\) therefore satisfies equation \eqref{eqB} with
	each of \(\mathbf z_1,\ldots,\mathbf z_n\). By
	Lemma~\ref{lem-second-family}, the second family likewise satisfies
	equations \eqref{eqA} and \eqref{eqB}.
	
	Thus the hypotheses of Lemma~\ref{lemma-lower-reduction} hold, and
	the points
	\[
	\left\{p_+,\,p_-,\,p_1,\ldots,p_{2n}\right\}
	\]
	form an equilateral set of cardinality \(2n+2\).
\end{proof}

%
%

\end{document}